\def\r{\mathbb R}
\def\h{\mathbb H}
\def\s{\mathbb S}
\def\l{\mathbb L}
\def\im{\mathrm {Im}}
\newtheorem{theorem}{Theorem}[section]
\newtheorem{proposition}[theorem]{Proposition}
\newtheorem{remark}[theorem]{Remark}
\newtheorem{lemma}[theorem]{Lemma}
\begin{document}

\title{Spacelike surfaces  with free boundary in the Lorentz-Minkowski space}
\author{Rafael L\'{o}pez\\
Departamento de Geometr\'{\i}a y Topolog\'{\i}a\\
Universidad de Granada\\
18071 Granada, Spain\\
email: rcamino@ugr.es \\
\vspace*{.5cm}\\
Juncheol Pyo\\
 Department of Mathematics\\
Pusan National University\\
Busan 609-735,  Korea\\
email: jcpyo@pusan.ac.kr}

\date{}

 \maketitle

\begin{abstract}
We investigate a variational problem in the Lorentz-Minkowski
space $\l^3$ whose critical points are spacelike surfaces with
constant mean curvature and making constant contact angle with a
given support surface along its common boundary. We show that if
the support surface is a pseudosphere, then the surface is a
planar disc or a hyperbolic cap. We also study the problem of
spacelike hypersurfaces with free boundary in the higher
dimensional Lorentz-Minkowski space $\l^{n+1}$.
\end{abstract}

\noindent{\it Keywords\/}: spacelike surface, constant mean curvature, Lorentz-Minkowski space

\noindent\emph{AMS Classification}: 53A10, 53C42

\section{Introduction}

It is well known that spacelike hypersurfaces with constant mean
curvature in Lorentzian spaces   are critical points of the area
functional under deformations that keep constantly the enclosed
volume  by the hypersurface.   The utility in Physics, specially in general relativity,   of constant mean curvature spacelike hypersurfaces is that they are convenient as initial data for the Cauchy problem of the Einstein equations and  their uniqueness property (\cite{bf,ge}): in a cosmological spacetime, there exists at most one compact Cauchy hypersurface with a given (non-zero) constant mean curvature and a maximal (i.e., zero mean curvature) compact Cauchy hypersurface is almost unique. Such hypersurfaces are important because their properties reflect those of the spacetime, such as, for example, the proof of the positive mass conjecture (\cite{sy}). Other interested property is that if there exists one compact hypersurface with constant mean curvature in a cosmological spacetime, in a neighbourhood of that hypersurface there exists a foliation by compact hypersurfaces with constant mean curvature and the mean curvature varies in a monotone way from slice to slice. Thus a function $t$ can be defined by the property that its value at each point of a given leaf of the foliation is equal to the mean curvature of that leaf. If this foliation is global and in the case that $t$ is non-zero, then the function $t$   provides a global time coordinate, whose gradient  is a timelike vector field in the spacetime (\cite{ba,cfm,fmm,mt}).   This may be of relevance for the problem of time in quantum gravity (\cite{be}). In particular, the problem of existence of such hypersurfaces is central in this theory and it has been treated widely (we only refer \cite{bi} and \cite{re} and references therein).  Also, this type of hypersurfaces become
asymptotically null as they approach infinity and then they are
suitable for studying propagation of gravitational waves
(\cite{go,st}).

In this work we are interested in a modified version of the
variational problem. Consider the three-dimensional space $\l^3$.
Given a smooth region $W\subset\l^3$, we consider a compact
spacelike surface $M$  that is a critical point of the area among
surfaces in $W$ preserving the volume of $M$, the boundary lies on
$\partial W$ and the  interior is included in  the interior of
$W$.  Then we admit that the deformations $M_t$ of $M$ are
subjected to the constraint that all boundaries $\partial M_t$
move in the prescribed support surface $\Sigma=\partial W$.
Besides the area of the surface,  in the energy functional we have
to add a term that represents, with a certain weight $\lambda$,
the energy (area) of the part $\Omega_t$ of $\Sigma$ bounded by
$\partial M_t$. In particular, the induced metric in $\Sigma$ is
non-degenerate. In this context, the critical points of the energy
for any volume-preserving variation are called stationary surfaces
and they are characterized by two properties: i) the mean
curvature is constant and ii) the contact angle of the surface
with the support $\Sigma$ along its boundary is also constant.
Because $\Sigma$ is non-degenerate, it makes sense to consider the
angle between the unit normal vectors of the stationary surface
$M$ and support $\Sigma$. In Euclidean space, this type of
problems appear in the context of the theory of capillary
surfaces, with a great influence in many areas of physics and
chemistry.

In Lorentzian spaces, this problem is introduced by Al\'{\i}as and Pastor (\cite{ap2})  considering the Lorentz-Minkowski space as the ambient space and  a spacelike plane or a hyperbolic plane as support surface $\Sigma$. When the ambient spaces are the other simply connected Lorentzian space forms, namely, the de Sitter space and the
anti-de Sitter space, similar results have been obtained in \cite{pa}. A first kind of supports are the umbilical surfaces of $\l^3$, that is,  non-degenerate planes, hyperbolic planes and pseudospheres (\cite{on}). As the boundary of a spacelike compact surface is a closed spacelike curve, in the case that  the support surface is a plane, this  plane must be spacelike.

This article is motivated by the results that appear in
\cite{ap2}. Among the umbilical surfaces of $\l^3$, the remaining
case to study is that $\Sigma$ is a pseudosphere, which is
considered in this work, showing:

\begin{theorem}\label{t1}
 The only stationary spacelike surfaces in $\l^3$ with embedded connected boundary and with pseudosphere as support surface are the planar discs   and the hyperbolic caps.
\end{theorem}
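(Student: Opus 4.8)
The plan is to prove that $M$ is totally umbilical and then to identify which totally umbilical spacelike surfaces are compatible with the boundary conditions. Recall that being stationary means precisely that the mean curvature $H$ is constant and that the hyperbolic contact angle $\theta$ between $M$ and $\Sigma$ along $\gamma=\partial M$ is constant; write $N$ for the future-directed timelike unit normal of $M$. A first observation is that $M$ is a topological disc: since $M$ is spacelike, the projection $(x_{1},x_{2},x_{3})\mapsto(x_{1},x_{2})$ restricts to a local diffeomorphism on $M$, and as $M$ is compact with $\partial M$ a single embedded circle, I would argue that this projection is a diffeomorphism onto a planar domain bounded by the Jordan curve image of $\gamma$. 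This is the only place where the connectedness and embeddedness of $\partial M$ enter.

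The key step uses that the support $\Sigma$ is totally umbilical. Along $\gamma$ take the spacelike unit tangent $\gamma'$ and work inside the Lorentzian $2$-plane $(\gamma')^{\perp}$. There $\{N,\nu\}$, with $\nu$ the spacelike outward conormal of $\gamma$ in $M$, is a pseudo-orthonormal basis (one timelike, one spacelike), and so is $\{\bar\nu,\bar N\}$, with $\bar\nu$ the timelike conormal of $\gamma$ in $\Sigma$ and $\bar N$ the spacelike unit normal of $\Sigma$; with suitable orientations the two bases are related by a boost:
\[
N=\cosh\theta\,\bar\nu+\sinh\theta\,\bar N,\qquad \nu=\sinh\theta\,\bar\nu+\cosh\theta\,\bar N .
\]
Umbilicity gives $\nabla_{\gamma'}\bar N=-\lambda\gamma'$ with $\lambda$ constant ($\nabla$ the flat connection of $\l^{3}$), and, since $\bar\nu$ is orthogonal to $\gamma'$, differentiating $\langle\bar\nu,\bar\nu\rangle=-1$ and $\langle\bar\nu,\bar N\rangle=0$ shows that $\nabla_{\gamma'}\bar\nu$ is a multiple of $\gamma'$ as well; hence differentiating the first boost relation along $\gamma$ gives $\nabla_{\gamma'}N=\theta'\nu+\mu\gamma'$ for some function $\mu$, so that $\langle\nabla_{\gamma'}N,\nu\rangle=\theta'=0$. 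Thus $\gamma'$ is a principal direction of $M$, i.e.\ $\gamma$ is a line of curvature of $M$.

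Now equip $M$ with its induced Riemannian metric and complex structure. Since $\l^{3}$ is flat, the Codazzi equation is formally the Euclidean one, so the Hopf quadratic differential $Q$ of $M$ is holomorphic because $H$ is constant; and in an isothermal coordinate adapted to $\partial M$, the fact that $\gamma$ is a line of curvature says exactly that the mixed coefficient of the second fundamental form vanishes on $\partial M$, i.e.\ $Q$ is real along $\partial M$. A holomorphic quadratic differential on a disc that is real on the boundary extends, by Schwarz reflection, to a holomorphic quadratic differential on the doubled surface, a sphere, hence vanishes; so $Q\equiv 0$ and $M$ is totally umbilical, thus an open piece of a spacelike plane (if $H=0$) or of a hyperbolic plane (if $H\neq 0$) — pseudospheres being excluded as non-spacelike. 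Finally, normalizing by an isometry of $\l^{3}$ fixing $\Sigma=\{\langle x,x\rangle=r^{2}\}$, one computes the intersection of such a plane (resp.\ hyperbolic plane) with $\Sigma$ and imposes constancy of the contact angle along it; this should force the plane to be $\{x_{3}=c\}$ (resp.\ the hyperbolic plane to be centered on the $x_{3}$-axis), so the intersection is a circle and $M$ is the flat disc (resp.\ the hyperbolic cap) it bounds, both of which are indeed stationary by rotational symmetry.

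The hard part will be the line-of-curvature step: one must fix the correct conventions in the Lorentzian plane $(\gamma')^{\perp}$ — where the ``angle'' between $M$ and $\Sigma$ is genuinely hyperbolic, $N$ being timelike and $\bar N$ spacelike — and it is exactly the umbilicity of the pseudosphere that makes the derivative $\theta'$ emerge as the sole obstruction to $\gamma$ being a line of curvature of $M$. The quadric computation in the last step and the injectivity of the projection in the first step are more routine, but still demand some care.
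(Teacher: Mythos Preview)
Your proof is correct and follows essentially the same route as the paper: establish that $M$ is a disc via the spacelike projection, show that $\partial M$ is a line of curvature (i.e.\ $\sigma(t,\nu)=0$), and then kill the Hopf differential. Your line-of-curvature computation, done abstractly from the boost relation and the umbilicity $\nabla_{\gamma'}\bar N=-\lambda\gamma'$, is a clean repackaging of the paper's explicit calculation using $N_S(p)=p$; and your Schwarz-reflection/doubling argument is equivalent to the paper's observation that $\operatorname{Im}(z^{2}\phi)$ is harmonic, vanishes on $\partial\mathbb{D}$, and hence vanishes identically.

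One correction to your last paragraph: the constant-angle condition does \emph{not} force the spacelike plane to be horizontal or the hyperbolic plane to be centered on the $x_3$-axis. In fact \emph{every} spacelike plane meets $\mathbb{S}^2_1$ at constant angle (if $M=\{\langle x-p,v\rangle=0\}$ with $\langle v,v\rangle=-1$, then $\langle N,N_S\rangle=\langle v,x\rangle=\langle v,p\rangle$ along $\partial M$), and likewise every hyperbolic plane $\mathbb{H}^2(p,r)$ meets $\mathbb{S}^2_1$ with $\langle N,N_S\rangle=(1-r^2-|p|^2)/(2r)$ constant. So once you have shown $M$ is totally umbilical you are already done: there is no further constraint to impose, and the ``planar discs'' and ``hyperbolic caps'' in the statement are simply the compact pieces cut out by $\mathbb{S}^2_1$, not necessarily rotationally symmetric ones (though they become so after an ambient isometry fixing $\mathbb{S}^2_1$). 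This does not affect the validity of your argument, only shortens it.
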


We point out that  it is implicitly assumed in the results  of
\cite{ap2,pa} that the boundary of the stationary surface is
embedded and connected. For example, in $\l^3$ there are pieces of
rotational spacelike surfaces with constant mean curvature bounded
by two concentric circles contained in the same plane and
orthogonal to the rotational axis. Of course, these surfaces
satisfy the boundary condition on the contact angle (see pictures
in \cite{lo2}). More recently, the first author has studied stationary surfaces in $\l^3$ with some assumption on the symmetry  of the surface, relating geometric quantities of the surface such as its height,  area and volume: \cite{lo3,lo4}.

The paper is organized as follows. After a preliminaries section,
where we present the variational problem, we consider in Section
\ref{sect-pseudo} our specific setting when the support is a
pseudosphere. The fact that the boundary of stationary surface is a
spacelike curve and a pseudosphere is timelike
 makes  necessary a slight modification of the
variational problem, which will be reformulated in this section.
In Section \ref{sect-main} we prove Theorem \ref{t1} and we finish
in Section \ref{sect-finish} with some discussions of the
variational problem in the general $n$-dimensional case.

\section{Preliminaries}\label{sect-preli}

Let $\l^3$ be the three dimensional
Lorentz-Minkowski space, that is, the real vector space
$\mathbb{R}^3$ endowed with the Lorentz-Minkowski metric
$$\langle,\rangle = (dx_1)^2 +(dx_2)^2 - (dx_3)^2,$$
where $(x_1, x_2, x_3)$ are the canonical coordinates of $\mathbb{R}^3$.  Given a connected surface $M$, a smooth immersion $x:M\rightarrow\l^3$ is called spacelike if the induced metric on $M$ via $x$ is positive definite. Observe that $a=(0,0,1)$ is a unit timelike vector field globally defined on $\l^3$, which determines a timelike orientation on $\l^3$. Thus, given a spacelike immersion, we can choose  a unique unitary timelike normal field $N$ globally defined on $M$ such that $\langle N,a\rangle<0$. This shows that $M$ is orientable. If $N$ is chosen as above, we say that $N$ is future-directed and the $M$ is oriented by a unit future-directed timelike normal vector field $N$.

Among examples of spacelike surfaces in $\l^3$, we point out here
the totally umbilical ones, that is, spacelike planes and hyperbolic
planes. A spacelike plane is given by the set $\{x\in\l^3;\langle
x-p,v\rangle=0\}$, where $p\in\l^3$ and $v$ is a timelike vector of
$\l^3$. The mean curvature is $H=0$. After an isometry, a hyperbolic
plane $\h^2(p,r)$ centered at $p$ and radius $r>0$ is given by
$$\h^2(p,r)=\{x\in\l^3;\langle x-p,x-p\rangle=-r^2\}.$$
This surface has two components: $\h^2_{+}(p,r)=\{x\in\h^2(p,r);x_3\geq x_3(p)\}$ and $\h^2_{-}(p,r)=\{x\in\h^2(p,r);x_3\leq -x_3(p)\}$. With the future-directed timelike orientation, the mean curvatures of $\h^2_{+}(p,r)$ and $\h^2_{-}(p,r)$ are  $1/r$ and $-1/r$, respectively.

We state the variational problem. Here we follow \cite{ap2} and we refer there for more details. Let $\Sigma$ be an embedded connected non-degenerate surface in $\l^3$ that divides the ambient space $\l^3$ in two connected components denoted by  $\l^3_+$ and $\l^3_{-}$. Let $M$ be a compact surface. In all results of this work, it is assumed that the boundary of $M$ is connected, although this is not necessary to establish  the variational problem.
Given a spacelike immersion $x:M\rightarrow\l^3$, an admissible variation of $x$ is a smooth map
$X:M\times (-\epsilon,\epsilon)\rightarrow\l^3$ such that for each $t\in (-\epsilon,\epsilon)$, the map $X_t:M\rightarrow \l^3$ defined by $X_t(p)=X(p,t)$ is a spacelike immersion with
$$X_t(\textrm{int}(M))\subset \l^3_+\ \ \mbox{and}\ \ X_t(\partial M)\subset\Sigma$$
and at the initial time $t=0$, we have $X_0=x$. The surface $\Sigma$ is called the support surface. Let $\lambda\in\r$. The energy function $E:(-\epsilon,\epsilon)\rightarrow\r$ is defined by
\begin{equation}\label{energy}
E(t)=\int_M dA_t+\lambda \int_{\Omega_t} d\Sigma
\end{equation}
where $\Omega_t\subset\Sigma$ is the domain bounded by
$X_t(\partial M)$,  $dA_t$ denotes the area element of $M$ with
respect to the metric induced by $X_t$ and $d\Sigma$  is the area
element on $\Sigma$.

The volume function of the variation $V:(-\epsilon,\epsilon)\rightarrow\r$ is defined by
$$V(t)=\int_{M\times[0,t]}X^*(dV),$$
where $X^*(dV)$ is the pullback of the canonical volume element $dV$
of $\l^3$. The variation is said to be volume-preserving if
$V(t)=V(0)=0$ for all $t$. The expressions of the first variation
formula for the energy $E$ and the volume $V$ are as follows:
\begin{equation}\label{var-area}
E'(0)=-2\int_M H\langle N,\xi\rangle dA-\int_{\partial M}\Big(\langle\nu,\xi\rangle+\lambda\langle\nu_\Sigma,\xi\rangle \Big) ds
\end{equation}
\begin{equation}\label{var-volume}
 V'(0)=-\int_M\langle N,\xi\rangle dA.
 \end{equation}
Here $N$ is an orientation on $M$, $H$ the corresponding mean curvature function, $\nu$ and $\nu_\Sigma$ are the unit inward conormal vectors of $M$ and $\Omega$ along $\partial M$, respectively, and $\xi$ is the variation vector field of the variation $X$:
$$\xi(p)=\frac{\partial X}{\partial t}(p,0).$$
Consider $\tau$ a unit tangent vector to $\partial M$ and let $\{\tau,\nu,N\}$ and $\{\tau,\nu_\Sigma,N_\Sigma\}$ be two orthonormal bases such that $\textrm{det}(\tau,\nu,N)=\textrm{det}(\tau,\nu_\Sigma,N_\Sigma)=1$. Denote $\epsilon=1$ or $-1$ depending   if $\Sigma$ is timelike or spacelike, that is, $\langle N_\Sigma,N_\Sigma\rangle=\epsilon$. Recall that as $\partial M$ is spacelike, then $\langle\nu_\Sigma,\nu_\Sigma\rangle=-\epsilon$. With respect to $\{\nu_\Sigma,N_\Sigma\}$, $N$ and $\nu$ are given by
\begin{eqnarray}\label{nuN}
N&=&-\epsilon\langle N,\nu_\Sigma\rangle\nu_\Sigma+\epsilon\langle N,N_\Sigma\rangle N_\Sigma.\\
 \nu&=&-\langle N,N_\Sigma\rangle\nu_\Sigma+\langle N,\nu_\Sigma\rangle N_\Sigma.
 \end{eqnarray}
Thus
$$\langle\nu,\xi\rangle=\langle N,\nu_\Sigma\rangle\langle N_\Sigma,\xi\rangle-\langle N,N_\Sigma\rangle\langle\nu_\Sigma,\xi\rangle.$$
Then (\ref{var-area}) writes as
\begin{eqnarray}\label{energy2}
E'(0)&=& -2\int_M H\langle N,\xi\rangle dA-\int_{\partial M}(\lambda-\langle N,N_\Sigma\rangle)\langle\nu_\Sigma,\xi\rangle ds\\
&-&\int_{\partial M} \langle N,\nu_\Sigma\rangle\langle N_\Sigma,\xi\rangle ds.\nonumber
\end{eqnarray}
The last term of (\ref{energy2}) vanishes since the vector field
$\xi$ is tangential to $\Sigma$
 along the boundary $\partial M$. Indeed, if $p\in\partial M$, the curve $t\longmapsto X(p,t)$ lies in the support surface
$\Sigma$ and then its velocity is  tangent to $\Sigma$. But at $t=0$, this velocity is just $\xi(p)$. Thus $\xi(p)\in T_p\Sigma$ and $\langle N_\Sigma,\xi\rangle=0$ along $\partial M$.

We say that the immersion $x$ is stationary if $E'(0)=0$ for every
admissible volume-preserving variations of $x$. By the method of
Lagrange multipliers, there exists $\mu\in\r$ such that $E'(0)+\mu
V'(0)=0$ for any such variations. From (\ref{var-volume}) and
(\ref{energy2}) we have
$$\int_M(2H+\mu) \langle N,\xi\rangle dA+\int_{\partial M}(\lambda-\langle N,N_\Sigma\rangle)\langle\nu_\Sigma,\xi\rangle ds=0.$$
A standard argument gives:

\begin{theorem}\label{t-cara} In the above conditions, the immersion $x:M\rightarrow \l^3$ is stationary if and only if the mean curvature $H$ is constant and the angle between $M$ and $\Sigma$ along $\partial M$  is constant.
\end{theorem}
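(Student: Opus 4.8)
The starting point is the identity obtained just before the statement: for the Lagrange multiplier $\mu$, every admissible volume-preserving variation of $x$ satisfies
\[\int_M(2H+\mu)\langle N,\xi\rangle\,dA+\int_{\partial M}(\lambda-\langle N,N_\Sigma\rangle)\langle\nu_\Sigma,\xi\rangle\,ds=0.\]
The plan is to feed two separate families of test variations into this identity and then invoke the fundamental lemma of the calculus of variations. First I would use variations whose variation field $\xi$ is compactly supported in $\textrm{int}(M)$: for these $\xi$ vanishes on $\partial M$, so the boundary integral drops out, while volume-preservation forces $\int_M\langle N,\xi\rangle\,dA=0$ by (\ref{var-volume}). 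One checks that every $u\in C^\infty_0(\textrm{int}(M))$ with $\int_M u\,dA=0$ is realized as $\langle N,\xi\rangle$ by such a variation: take the normal variation $X(p,t)=x(p)+t\,u(p)N(p)$, which for $|t|$ small remains a spacelike immersion with interior in $\l^3_+$ and boundary fixed on $\Sigma$ (by compactness of $M$ and openness of the spacelike condition, since $X_t=x$ outside a compact subset of $\textrm{int}(M)$), and then normalize it to be volume-preserving to all orders without changing $\xi$ at $t=0$ by the standard argument. Hence $\int_M(2H+\mu)u\,dA=0$ for all such $u$, so $2H+\mu$ is constant on $M$, and since $\mu$ is a real constant, $H$ is constant.

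Next, writing $2H+\mu=c$ (a constant), the first integral in the identity equals $c\int_M\langle N,\xi\rangle\,dA=-c\,V'(0)=0$ for every admissible volume-preserving variation, so we are left with $\int_{\partial M}(\lambda-\langle N,N_\Sigma\rangle)\langle\nu_\Sigma,\xi\rangle\,ds=0$ for all of them. I would then show that any $\psi\in C^\infty(\partial M)$ occurs as $\langle\nu_\Sigma,\xi\rangle$ along $\partial M$ for some admissible volume-preserving variation: starting from $\psi$, build near $\partial M$ a variation that moves each boundary point along $\Sigma$ with velocity $\psi\,\nu_\Sigma$, so that $\xi$ is tangent to $\Sigma$ with $\langle N_\Sigma,\xi\rangle=0$ along $\partial M$ (as already observed before the theorem), extend it into $M$, and restore the volume constraint by composing with an interior-supported deformation that leaves the boundary values unchanged. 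This yields $\int_{\partial M}(\lambda-\langle N,N_\Sigma\rangle)\psi\,ds=0$ for all $\psi$, hence $\langle N,N_\Sigma\rangle\equiv\lambda$ along $\partial M$. Since $\langle N,N\rangle=-1$ and $\langle N_\Sigma,N_\Sigma\rangle=\epsilon$ are fixed, this says precisely that the (hyperbolic) angle between $M$ and $\Sigma$ is constant along $\partial M$, equal to the value prescribed by $\lambda$.

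For the converse, if $H$ is constant and the contact angle is the constant forced by $\lambda$, i.e. $\langle N,N_\Sigma\rangle\equiv\lambda$ along $\partial M$, I would set $\mu=-2H$; then $2H+\mu=0$ and $\lambda-\langle N,N_\Sigma\rangle=0$, so by (\ref{var-volume}) and (\ref{energy2}) one gets $E'(0)+\mu V'(0)=0$ for every admissible variation, and in particular $E'(0)=0$ on the volume-preserving ones, so $x$ is stationary. The genuine work is concentrated in the constructions of Step 2 (and, to a lesser degree, Step 1): producing admissible variations with prescribed interior, resp. boundary, data that simultaneously keep $\textrm{int}(M)$ in $\l^3_+$, keep $\partial M$ on $\Sigma$, stay spacelike, and can be normalized to preserve volume exactly. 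Spacelikeness and the inclusion in $\l^3_+$ follow from taking $|t|$ small plus compactness and openness; keeping $\partial M$ on $\Sigma$ requires extending the given tangential field on $\partial M$ via a tubular-neighbourhood or flow construction adapted to $\Sigma$; and the exact volume normalization is the standard lemma that a one-parameter family may be modified by an interior deformation to make $V(t)\equiv 0$ while preserving its first-order data elsewhere. These are the points that turn the phrase ``a standard argument'' into an actual argument.
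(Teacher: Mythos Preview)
Your proposal is correct and is precisely the ``standard argument'' the paper invokes without writing out: the paper derives the identity $\int_M(2H+\mu)\langle N,\xi\rangle\,dA+\int_{\partial M}(\lambda-\langle N,N_\Sigma\rangle)\langle\nu_\Sigma,\xi\rangle\,ds=0$ and then states the theorem with no further detail, so there is nothing to compare beyond noting that you have faithfully unpacked what was left implicit. One small remark: in the Lagrange-multiplier formulation the identity actually holds for \emph{all} admissible variations (not just volume-preserving ones), which lets you drop the mean-zero restriction in Step~1 and conclude $2H+\mu\equiv 0$ directly; your slightly more cautious route via mean-zero test functions is of course also valid.
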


In case $\Sigma$ is spacelike,   $N_\Sigma$ is a unit timelike
vector. Considering   both $N$ and $N_\Sigma$ are oriented by a
unit future-directed timelike orientation,  the angle $\theta$ is
defined as $\lambda=\langle N,N_\Sigma\rangle=-\cosh\theta$. If
$\Sigma$ is a timelike surface, $N_\Sigma$ is a unit spacelike
vector. Assuming again that $M$ is future-directed oriented, the
angle between $M$ and $\Sigma$ is the number $\theta$ such that
$\langle N,N_\Sigma\rangle=\sinh\theta$ (\cite{Ratcliffe}).

\begin{remark}
\begin{enumerate}
\item When $\lambda=0$, we have the classical problem of a
    surface with critical area and with free boundary in
    $\Sigma$. In such a case, the intersection between $M$ and
    $\Sigma$ is orthogonal.
\item Our   definition of the energy $E$ in (\ref{energy}) is
    motivated by what occurs in Euclidean setting when one
    considers liquid drops resting in some support. In order
    to define $E(t)$ and $V(t)$, however, it is not necessary
    that the images of the immersions $X_t$ of the variation
    lie in one of the two domains determined by $\Sigma$.
\item In general, we call a stationary surface $M$ in $\l^3$ supported on a non-degenerate surface $\Sigma$ as a spacelike surface with constant mean curvature whose boundary lies in $\Sigma$ and $M$ and $\Sigma$ make constant contact angle along the boundary of $M$.
\end{enumerate}
\end{remark}

\section{The case of pseudosphere as support surface}\label{sect-pseudo}

After an isometry of $\l^3$, a pseudosphere   $\s^2_1(p,r)$
centered at $p\in\l^3$ and radius $r>0$, is defined by
$$\s^2_1(p,r)=\{x\in\l^3;\langle x-p,x-p\rangle=r^2\}.$$
Recall that $a=(0,0,1)$. This surface is timelike  with constant
curvature $1/r^2$. Denote the  waist of $\s^2_1(p,r)$ as
$C(p,r)=\s^2_1(p,r)\cap\{x_3=\langle p,a \rangle \}$.

Let  $x:M\rightarrow\l^3$ be a compact spacelike surface immersed
into $\l^3$.    For a given closed curve
 $\Gamma\subset\s^2_1(p,r)$, we say that $\Gamma$ is the boundary of
 the immersion $x$ if the restriction map $x_{|\partial M}:\partial M\rightarrow\Gamma$ is a diffeomorphism.
Because our problem is invariant by homotheties and isometries of
the ambient space, without loss of generality we will assume that
the support surface is the unit pseudosphere centered at the origin
$O$, that is, $\s^2_1(O,1)=\s^2_1=\{x\in\l^3;\langle x,x\rangle=1\}$
with $C(O,1)=C$ as its waist. This surface is also known in the
literature as the de Sitter surface of $\l^3$. The ambient space
$\l^3$ is divided by $\s^2_1$ into two domains $\l^3_+$ and
$\l^3_-$:
$$\l^3_+=\{x\in\l^3;\langle x,x\rangle<1\},\ \ \l^3_-=\{x\in\l^3;\langle x,x\rangle>1\}.$$
The pseudosphere $\s^2_1$ can be globally parametrized by means of a diffeomorphism  $F:\r\times C\rightarrow\s^2_1$ given by $F(t,q)=\gamma_q(t)$, where
$$\gamma_q(t)=\mbox{exp}_q(ta)=\cosh(t)q+\sinh(t)a,$$
is the (future pointing) unitary geodesic orthogonal to $C$ through the point $q\in C$. Recall that $a=(0,0,1)$. The orthogonal projection $\pi:\s^2_1\rightarrow C$ associates to each $p\in\s^2_1$ the unique point $\pi(p)$ such that $F(t,\pi(p))=p$ for a certain $t$. By the expression of $F$, we deduce
\begin{equation}\label{expressionpi}
\pi(p)=\frac{1}{\sqrt{1+\langle p,a\rangle^2}}\Big(p+\langle p,a\rangle a\Big),\ p\in\s^2_1.
\end{equation}
If $u$ is a smooth function defined on $C$, the geodesic graph of
$u$ (on $\s^2_1$) is the curve given by $\{F(u(q),q);q\in\s^1\}$.
If $p\in \s^2_1$ and $v\in T_p\s^2_1$, a straightforward
computations gives
\begin{equation}\label{pi}
(d\pi)_p(v)=\frac{v-\langle v,a\rangle a}{\sqrt{1+\langle p,a\rangle^2}}-
\frac{\langle p,a\rangle\langle v,a\rangle}{(1+\langle p,a\rangle)^{\frac{3}{2}}}(p+\langle p,a\rangle a).
\end{equation}

\begin{proposition}\label{pr-graph} Let $\alpha:\s^1\rightarrow \s^2_1$ be a closed spacelike curve. Then $\pi\circ\alpha$ is a covering map of $C$. In particular, if $\alpha$ is an embedding then $\alpha(\s^1)$ is a geodesic graph on $C$. In particular, in  $\s^2_1$ there exist no spacelike nulhomologous curves.
\end{proposition}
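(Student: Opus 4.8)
The plan is to deduce everything from one computation: the fibres of $\pi$ are timelike curves. The fibre of $\pi$ over $q\in C$ is the geodesic $\gamma_q(t)=\cosh(t)q+\sinh(t)a$, and since $q\in C$ satisfies $\langle q,q\rangle=1$ and $\langle q,a\rangle=0$ while $\langle a,a\rangle=-1$, one gets $\langle\gamma_q'(t),\gamma_q'(t)\rangle=\langle\sinh(t)q+\cosh(t)a,\sinh(t)q+\cosh(t)a\rangle=\sinh^2(t)-\cosh^2(t)=-1$. Since $\pi$ is a submersion onto $C$ (it is the composition of $F^{-1}$ with the projection $\r\times C\to C$), its kernel $\ker(d\pi)_p$ at any $p\in\s^2_1$ is exactly the line tangent to the geodesic fibre through $p$, hence a timelike line inside the Lorentzian plane $T_p\s^2_1$. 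One could alternatively read this off the explicit formula (\ref{pi}).

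Now let $\alpha\colon\s^1\to\s^2_1$ be spacelike, so $\alpha'(s)$ is a nonzero spacelike vector at every $s$; such a vector cannot lie on a timelike line, hence $\alpha'(s)\notin\ker(d\pi)_{\alpha(s)}$, i.e.\ $(\pi\circ\alpha)'(s)=(d\pi)_{\alpha(s)}(\alpha'(s))\neq 0$ for all $s$. Thus $\pi\circ\alpha$ is a local diffeomorphism, and as $\s^1$ is compact and $C$ is connected it is a covering map of $C$, of some degree $d$ with $|d|\geq 1$. This also gives the last assertion immediately: because $F$ endows $\s^2_1$ with the product structure $\r\times C$, the projection $\pi$ is a homotopy equivalence, so $\pi_*$ is an isomorphism on first homology; since $(\pi\circ\alpha)_*[\s^1]=d\,[C]\neq 0$ in $H_1(C)\cong\mathbb{Z}$, also $\alpha_*[\s^1]\neq 0$ in $H_1(\s^2_1)$, so no spacelike curve in $\s^2_1$ is nullhomologous.

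It remains to treat the embedded case, where I claim $d=\pm1$. Suppose $|d|\geq 2$. For each $q\in C$ the set $(\pi\circ\alpha)^{-1}(q)$ consists of exactly $|d|$ points, which $\alpha$ carries to $|d|$ distinct points of the geodesic $\gamma_q$; order them by the parameter $t$ along $\gamma_q$. Because $\alpha$ is an embedding these points stay distinct as $q$ runs over $C$, so their $t$-coordinates define $|d|$ continuous single-valued functions $t_1<\cdots<t_{|d|}$ on $C$, whose geodesic graphs are $|d|$ pairwise disjoint (hence, being finitely many closed sets, also open) nonempty subsets of $\s^2_1$ with union $\alpha(\s^1)$. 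This contradicts the connectedness of the embedded circle $\alpha(\s^1)$. Therefore $\pi\circ\alpha$ is a diffeomorphism, and writing $q=\pi(\alpha(s))$ and letting $u(q)$ be the $t$-coordinate of $\alpha\big((\pi\circ\alpha)^{-1}(q)\big)$ exhibits $\alpha(\s^1)=\{F(u(q),q):q\in C\}$ as a geodesic graph on $C$.

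The only genuine computation is the sign calculation in the first paragraph; once the fibres are known to be timelike, the rest is soft differential topology. The single point needing mild care is the sheet-ordering argument in the embedded case, but it is the standard picture for a connected covering of the circle; alternatively one may argue in the rotation coordinates $(t,\phi)$, in which the induced metric of $\s^2_1$ is $-dt^2+\cosh^2(t)\,d\phi^2$, so that a spacelike curve has strictly monotone $\phi$-component, and an embedded one therefore winds exactly once.
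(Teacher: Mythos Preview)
Your argument is correct and follows the same overall line as the paper: show that $\psi=\pi\circ\alpha$ has nowhere vanishing derivative because $\alpha$ is spacelike, conclude it is a covering map, and then handle the embedded case. The paper reaches the nonvanishing of $\psi'$ by the explicit computation $\langle\psi',\psi'\rangle=\langle\alpha',\alpha'\rangle/(1+\langle\alpha,a\rangle^2)$ from formula~(\ref{pi}), whereas you argue more geometrically that $\ker(d\pi)$ is the tangent line to the timelike fibre $\gamma_q$ and hence cannot contain a spacelike vector; these are equivalent observations. Your treatment is in fact more complete than the paper's in two places: the paper simply asserts that an embedding forces the covering to be one-to-one, while you supply the sheet-ordering argument (which works precisely because the fibre coordinate $t$ gives a global total order preventing monodromy); and the paper leaves the nullhomologous statement implicit, while you spell out the homology argument via the homotopy equivalence $F$.
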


\begin{proof} Define $\psi=\pi\circ\alpha:\s^1\rightarrow C$, that is,
$$\psi(s)=\frac{1}{\sqrt{1+\langle\alpha(s),a\rangle^2}}\Big(\alpha(s)+\langle\alpha(s),a\rangle a\Big).$$
From (\ref{pi}),
$$\langle\psi'(s),\psi'(s)\rangle=\frac{\langle\alpha'(s),\alpha'(s)\rangle}{1+\langle\alpha(s),a\rangle^2}.$$
Because $\alpha$ is spacelike, the map $\psi$ is a local diffeomorphism and hence $\psi:\s^1\rightarrow C$ is a covering map. When $\alpha$ is an embedding, then the covering map $\psi$ is one-to-one, that is, $\psi$ is a global diffeomorphism between $\s^1$ and  $C$, showing that $\alpha$ is a graph on $C$: $\pi_{|\alpha(\s^1)}=\psi\circ\alpha^{-1}$.
\end{proof}

\begin{remark} In the higher dimensional case $(n\geq2)$,  if $M^n$ is a compact submanifold and $x:M^n\rightarrow\s^{n+1}_1$ is a spacelike hypersurface in the $(n+1)$-dimensional de Sitter space $\s^{n+1}_1$, the map  $\psi=\pi\circ x$ is a covering map between $M^n$ and the $n$-sphere $\s^n=\s^{n+1}_1\cap\{x_{n+2}=0\}$, which is simply-connected. Thus the covering map $\psi$ is a one-to-one (\cite{mo}).
\end{remark}

From Proposition \ref{pr-graph}, if $x:M\rightarrow\l^3$ is a spacelike
immersion of a compact surface $M$, with $x(\partial M)=\Gamma$ a
closed curve included in $\s^2_1$, then $\Gamma$ does not bound a
domain of $\s^2_1$ and thus, the variational problem established
in Section \ref{sect-preli} should be reformulated. With this
purpose, let $\Gamma$ be a spacelike embedded  curve in $\s^2_1$
and consider the waist $C$ of $\s^2_1$. Since $\Gamma$ is
homologous to $C$, $\Gamma\cup C$ bounds a domain
$\Omega\subset\s^2_1$. Given a variation  $X$ of $x$, for values
$t$ closes to $t=0$, $X_t(\partial M)$ is homologous to $\Gamma$.
Then in the second term of $E$ in (\ref{energy}), we replace
$\Omega_t$ by the domain bounded by $X_t(\partial M)\cup C$. Let
us observe that one can change $C$ for other curve $C'$ homologous
to $C$ because the corresponding integral
$\int_{\Omega_t'}d\Sigma$ changes only by an additive constant,
with no consequence in the formula $E'(0)$. Therefore, in the case
that $\Sigma$ is a pseudosphere, Theorem \ref{t-cara} holds in the
same terms.

\begin{remark} We point out  that there are no stationary surfaces included in $\l^3_{-}$ and with embedded boundary $\Gamma$ in $\s^2_1$, because  $\Gamma$ must be homologous to $C$, but
there are no compact   surfaces in $\l^3_{-}$ spanning $\Gamma$, independently if the surface is or it not spacelike.
\end{remark}

\section{Proof of Theorem \ref{t1}}\label{sect-main}

From now on, we consider that the boundary $\Gamma$  of the
stationary surface $M$ is an embedded connected spacelike curve in
$\s^2_1$ and thus, homologous to $C$. Replacing $C$ by other
homologous curve $C'$ if necessary, we can assume that the domain
$\Omega\subset\s^2_1$ bounded by $\Gamma\cup C$ is an embedded
surface. Denote $\nu_S$ the unit inward conormal vector of
$\Omega$ along $\Gamma$ and $N_S$ is the Gauss map of $\Omega$. In
particular, for $p\in\Omega$,  $N_S(p)=p$. Let $\theta$ be the
constant such that $\langle N,N_S\rangle=\sinh\theta$. From
  (\ref{nuN}) we have
\begin{eqnarray}
N&=&-\cosh\theta\nu_S+\sinh\theta  N_S.\label{nuN2}\\
\nu&=&-\sinh\theta \nu_S+\cosh\theta  N_S.\label{nuN22}
\end{eqnarray}

From Proposition \ref{pr-graph}, we have

\begin{lemma}
 Let $\Gamma$ be a closed embedded spacelike curve in $\s^2_1$. Then $\Gamma$ is a graph on the plane $P= \{x_3=0\}$. Moreover, the orthogonal projection of $\Gamma$ on $P$ bounds a simply-connected domain.
\end{lemma}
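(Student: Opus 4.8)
The plan is to compare the orthogonal projection $\Pi:\l^3\to P$, $\Pi(x_1,x_2,x_3)=(x_1,x_2,0)$, with the geodesic projection $\pi:\s^2_1\to C$ introduced in the Preliminaries, and then to invoke Proposition~\ref{pr-graph}. The key point is that on $\s^2_1$ these two projections differ only by a radial rescaling in $P$. Indeed, for $p=(p_1,p_2,p_3)\in\s^2_1$ one has $\langle p,a\rangle=-p_3$, so in $(\ref{expressionpi})$ we get $p+\langle p,a\rangle a=(p_1,p_2,0)=\Pi(p)$ and $1+\langle p,a\rangle^2=1+p_3^2=p_1^2+p_2^2=|\Pi(p)|^2$, where $|\cdot|$ denotes the Euclidean norm in $P$; hence
\[
\pi(p)=\frac{\Pi(p)}{|\Pi(p)|},\qquad p\in\s^2_1 .
\]
In particular $\Pi(p)\neq 0$, and $\pi(p)$ is nothing but the radial projection of $\Pi(p)$ onto the unit circle $C\subset P$.

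First I would show that $\Pi|_\Gamma$ is injective. If $\Pi(p)=\Pi(q)$ with $p,q\in\Gamma$, then by the displayed identity $\pi(p)=\pi(q)$; but $\Gamma$ is an embedded spacelike curve, so Proposition~\ref{pr-graph} gives that $\pi|_\Gamma$ is one-to-one, whence $p=q$. Moreover, since $\Gamma$ is spacelike, every tangent vector $v=(v_1,v_2,v_3)$ to $\Gamma$ satisfies $v_1^2+v_2^2>v_3^2\geq 0$, so $(d\Pi)(v)=(v_1,v_2,0)\neq 0$ and $\Pi|_\Gamma$ is an immersion. Being an injective immersion of the compact curve $\Gamma\cong\s^1$, it is an embedding, so $\Gamma^{*}:=\Pi(\Gamma)$ is a smooth Jordan curve in $P$ and $\Gamma$ is the graph over $\Gamma^{*}$ of the smooth function $h=x_3\circ(\Pi|_\Gamma)^{-1}$; this is the asserted graph property.

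Finally, by the Jordan curve theorem $P\setminus\Gamma^{*}$ has exactly two connected components, one bounded and one unbounded, and by the Schoenflies theorem the bounded one, call it $D$, is homeomorphic to an open disc; in particular $D$ is simply connected and $\Gamma^{*}=\partial D$, which is the second assertion. I do not expect any genuine obstacle here: once the factorization $\pi=\Pi/|\Pi|$ on $\s^2_1$ is observed, the graph property follows at once from Proposition~\ref{pr-graph}, and the simple connectedness is just the planar Jordan--Schoenflies theorem. If desired, one can add that, since $\pi\circ\alpha$ is a diffeomorphism onto $C$ for any parametrization $\alpha$ of $\Gamma$, the curve $\Gamma^{*}$ winds exactly once around the origin, so that the closed unit disc of $P$ is contained in $\overline{D}$.
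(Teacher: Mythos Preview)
Your proof is correct and follows essentially the same approach as the paper: both establish the relation $\Pi(p)=\sqrt{1+\langle p,a\rangle^{2}}\,\pi(p)$ on $\s^2_1$ (you write it equivalently as $\pi=\Pi/|\Pi|$), use Proposition~\ref{pr-graph} to transfer the injectivity of $\pi|_\Gamma$ to $\Pi|_\Gamma$, and use the spacelike condition to see that $\Pi|_\Gamma$ is an immersion. Your argument is slightly cleaner on the injectivity step and more explicit about invoking Jordan--Schoenflies for the simple-connectedness, which the paper leaves implicit.
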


\begin{proof}
  By Proposition \ref{pr-graph}, $\Gamma$ is a graph of $\s^2_1$ on $C$. Let $\Pi:\l^3\rightarrow P$ be the orthogonal projection   onto   $P$, that is, $\Pi(q)=q+\langle q,a\rangle a$. From (\ref{expressionpi}) we obtain
  $\Pi(q)=\sqrt{1+{\langle q,a\rangle}^2}\pi(q)$. On $\Gamma$, the map $\Pi:\Gamma\rightarrow P$ is a local diffeomorphism since $\Gamma$ is spacelike: if $\alpha:\s^1\rightarrow\Gamma$ is a parametrization, $\alpha=\alpha(s)$,   we have
  $$|(\Pi\circ\alpha)'(s)|^2=|\alpha'(s)|^2+\langle\alpha'(s),a\rangle^2\geq |\alpha'(s)|^2.$$
On the other hand, if  there exist two distinct points $q_1, q_2\in\Gamma$ such that $\Pi(q_1)=\Pi(q_2)$,  the symmetry of $\s^2_1$  implies that $q_2=-q_1$. From the above relation between $\Pi$ and $\pi$, $\pi(q_1)=\pi(q_2)$: contradiction. Thus $\Pi:\Gamma\rightarrow\Pi(\Gamma)$ is a diffeomorphism.
\end{proof}

\begin{lemma}\label{lemma2}
Let $x:M\rightarrow\l^3$ be a compact spacelike immersion whose boundary $\Gamma$ is an embedded curve included  $\s^2_1$. Then $x(M)$ is a graph on $P$ and thus, $x(M)$ is a topological disc.
\end{lemma}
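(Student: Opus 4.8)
The plan is to show that the orthogonal projection $\Pi\colon\l^3\to P$, $\Pi(q)=q+\langle q,a\rangle a$, restricts to a global diffeomorphism from $M$ onto the closed region bounded by $\Pi(\Gamma)$; once this is done, the graph property and the fact that $x(M)$ is a disc follow immediately.

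First I would check that $f:=\Pi\circ x\colon M\to P$ is a local diffeomorphism at every point of $M$, interior or boundary. Since $x$ is spacelike, every nonzero tangent vector $w=(w_1,w_2,w_3)$ of $x(M)$ satisfies $w_1^2+w_2^2-w_3^2>0$, hence $(w_1,w_2)\neq(0,0)$; thus $d\Pi$ is injective on each tangent plane, so $df_p$ is an isomorphism for all $p\in M$. By the previous lemma, $f$ maps $\partial M$ diffeomorphically onto the Jordan curve $J:=\Pi(\Gamma)$, which bounds a simply-connected domain $D\subset P$; I will write $U$ for the unbounded component of $P\setminus J$.

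The core of the argument is a monodromy (counting) argument. Because $M$ is compact the map $f$ is proper, and for $y\in P\setminus J$ one has $f^{-1}(y)\subset\operatorname{int}(M)$ (no boundary point maps into $P\setminus J$ since $f(\partial M)=J$), a finite set near which $f$ is a local diffeomorphism; hence $n(y):=\#f^{-1}(y)$ is a well-defined, locally constant function on $P\setminus J$. Since $f(M)$ is compact it misses points far away, so $n\equiv 0$ on the connected set $U$; therefore $f(M)\subset\overline D$, and because $f(\operatorname{int}(M))$ is open, $f(\operatorname{int}(M))\subset D$. Thus $f|_{\operatorname{int}(M)}\colon\operatorname{int}(M)\to D$ is a proper local diffeomorphism between boundaryless surfaces, hence a covering map; as $D$ is simply connected and $\operatorname{int}(M)$ is connected, it must be a diffeomorphism. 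Together with the boundary behaviour this makes $f\colon M\to\overline D$ a continuous bijection from a compact space, hence a homeomorphism. Finally, for each $y\in\overline D$ there is a unique $p\in M$ with $\Pi(x(p))=y$, and letting $h(y)$ be the third coordinate of $x(p)$ exhibits $x(M)$ as the graph of $h$ over $\overline D$; in particular $x$ is injective, hence an embedding, and $x(M)$ is homeomorphic to the disc $\overline D$.

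The step that needs the most care is ruling out that interior points of $M$ project outside $\overline D$ (onto $J$ or into $U$): this is precisely what the properness/counting argument handles, and it is also the place where the compactness of $M$ and the simple connectivity of $D$ supplied by the previous lemma are used essentially.
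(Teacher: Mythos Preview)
Your proof is correct and follows essentially the same route as the paper: show that the vertical projection $\Pi\circ x$ is a local diffeomorphism by spacelikeness, that its image is $\overline{D}$, and that it is a covering of the simply-connected $\overline{D}$, hence a global diffeomorphism. The paper is terser---it asserts $\Pi(M)=\overline{D}$ and cites \cite{ap2} for details---whereas you supply the degree/counting argument explicitly; this is a welcome elaboration but not a different method.
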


\begin{proof}From the above lemma, there exists a simply-connected compact domain $D\subset P$ such that  $\Gamma$ is a graph on $\partial D$. If $\Gamma$ is the boundary of a spacelike immersed surface $M$, then it is known that $M$ is a graph on $D$ and thus, a topological disc: see for example, \cite[Lemma 3]{ap2}. The idea is the following. As in the above proof, the fact that $M$ is spacelike means that $\Pi:x(M)\rightarrow P$ is a local diffeomorphism with
$|(d\Pi)_p(v)|^2=|v|^2+\langle v,a\rangle^2\geq |v|^2$. The spacelike condition on $M$ implies that $\Pi(M)=\overline{D}$, in particular $\Pi:x(M)\rightarrow \overline{D}$ is a covering map. As $D$ is a simply-connected domain, then  $\Pi:M\rightarrow \overline{D}$ is a global diffeomorphism.
\end{proof}

We now proceed to show Theorem \ref{t1}.

\textit{Proof of Theorem \ref{t1}} Let $x:M\rightarrow\l^3$ be a
stationary surface. By Lemma \ref{lemma2} we can parametrize $M$
conformally to the closed unit disc $\overline{{\mathbb
D}}=\{(u,v)\in\r^2;u^2+v^2\leq 1\}$ such that $z=u+iv$ is a
conformal coordinate of $\overline{{\mathbb D}}$. By conformality,
we have
$$\langle x_u,x_u \rangle=\langle x_v,x_v \rangle=E^2,\ \langle x_u,x_v \rangle=0.$$
Let $h_{ij},$ $i,j=1,2$ be the coefficients of the second fundamental form of $x:M\rightarrow\l^3$. More precisely,
$$h_{11}=\langle x_{uu},N \rangle,\ h_{12}=h_{21}=\langle x_{uv},N \rangle,\ h_{22}=\langle x_{vv},N \rangle.$$
We introduce the Hopf quadratic differential $\phi=\phi(z,\bar{z})=(h_{11}-h_{22}-2ih_{12})dz^2$ which is invariant by a conformal coordinate of $M.$ The Hopf differential $\phi$ has two important properties:
\begin{enumerate}
\item $\phi$ is holomorphic if and only if the mean curvature of the immersion is constant. This is a consequence of the Codazzi equation.
\item $\phi$ vanishes at some point $p\in M$ if and only if $p$ is an umbilical point. This occurs because  $|h_{11}-h_{22}-2ih_{12}|^2=\frac{1}{4E^2}(H^2+K)\geq 0$,  where $K$ is the Gaussian curvature of $M$.
\end{enumerate}
As a consequence, in a constant mean curvature spacelike surface the holomorphicity of $\phi$ implies that the set of  umbilical points coincides with the zeroes of a holomorphic differential form. Therefore, either umbilical points are isolated or the immersion is totally umbilical.

On the boundary $\partial M$, we have $|z|=1$ and then,
$z=e^{i\theta}$. Since
$\partial_{z}=\frac{1}{2}(\partial_u-i\partial_v)=\frac{1}{2}(\bar{z}\partial_r-i\bar{z}\partial_{\theta})$,
then
$$\phi=4\sigma(\partial_z,\partial_z)dz^2=  \bar{z}^2\sigma(\partial_r,\partial_r)-2i\bar{z}^2\sigma
 (\partial_r,\partial_{\theta})-\bar{z}^2\sigma(\partial_{\theta},\partial_{\theta}).$$

 Hence on $|z|=1$, we get
 $$\im(z^2\phi)=-\sigma(\partial_r,\partial_{\theta}).$$
 On the other hand, the unit tangent $t$ and the inward-pointing unit conormal $\nu$ along $\partial M$ are
denoted by $$t=E^{-1}\partial_{\theta},\ \ \nu=-E^{-1}\partial_{r}.$$
We have then $\im(z^2\phi)=E^2\sigma(t,\nu)$.
If $\overline{\nabla}$ is the Levi-Civita connection on $\l^3$, using (\ref{nuN2})-(\ref{nuN22}) and $N_S(p)=p$, we obtain:
\begin{eqnarray*}\sigma(t,\nu)&=&
-\langle\overline{\nabla}_t N,\nu\rangle=\langle \overline{\nabla}_t \nu,N\rangle=\langle -\sinh\theta\overline{\nabla}_t\nu_S+\cosh\theta  \overline{\nabla}_t N_S,N\rangle\\
&=&-\sinh\theta\langle \overline{\nabla}_t\nu_S,N\rangle+\cosh\theta\langle t,N\rangle=-\sinh\theta\langle \overline{\nabla}_t\nu_S,N\rangle\\
&=&\sinh\theta\cosh\theta\langle \overline{\nabla}_t\nu_S,\nu_S\rangle-
\sinh^2\theta\langle \overline{\nabla}_t\nu_S,N_S\rangle\\
&=&\frac12 \sinh\theta\cosh\theta  t\langle\nu_S,\nu_S\rangle+\sinh^2\theta\langle\nu_S,\overline{\nabla}_t N_S\rangle\\
&=&
\sinh^2\theta\langle\nu_S,t\rangle=0.
\end{eqnarray*}
In other words, the harmonic function $\im(z^2\phi)$ vanishes on
$\partial \mathbb D$, hence it must be identically zero in
$\mathbb D$. This implies that the holomorphic function $z^2\phi$
must be constant in $\mathbb D$. Since at origin the value of
$z^2\phi$ is zero, then $z^2\phi\equiv0$. This implies that
$\phi=0$ on $M$ and the immersion is totally umbilical.  $\Box$

We remark that any spacelike plane intersects $\s_1^2$ at constant angle and that when a hyperbolic plane intersects $\s_1^2$, this occurs at constant contact angle too. Indeed, if $M$ is the plane $M=\{x\in\l^3;\langle x-p,v\rangle=0\}$, with $\langle v,v\rangle=-1$, $\langle v,a\rangle<0$, then for any $x\in M\cap\s_1^2$, we have
$N(x)=v$ and $N_S(x)=x$. Thus along $\partial M$, $\langle N, N_S\rangle=\langle p,v\rangle$. In the case that $M$ is a hyperbolic plane of type $\h^2_+(p,r)$ or $\h^2_{-}(p,r)$, $N(x)=(x-p)/r$ and then
$\langle N,N_S\rangle=(1-r^2-|p|^2)/(2r)$ along $\partial M$, hence that the contact angle is  constant again.

\section{Further discussions of the problem in arbitrary dimensions}\label{sect-finish}

In this section we give some remarks about the problem of hypersurfaces with free boundaries in the $(n+1)$-dimensional Lorentz-Minkowski space $\l^{n+1}$. Following \cite{ap2}, we ask whether the totally umbilical hypersurfaces are the only stationary hypersurfaces in $\l^{n+1}$ whose support hypersurface is an umbilical hypersurface.

In this sense, the conjecture 1 in \cite[p. 1330]{ap2} is true.
The proof combines the maximum principle and the characterization
of constant mean curvature compact spacelike  hypersurfaces
bounded by an $(n-1)$-sphere.

\begin{theorem}\label{t-conj1} The only stationary hypersurfaces with whose boundary is embedded and resting in a spacelike hyperplane are hyperplanar balls   and hyperbolic caps.
\end{theorem}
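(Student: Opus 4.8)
The plan is to reduce Theorem~\ref{t-conj1} to the case of a spacelike hyperplane support by adapting the two-dimensional argument and its $n$-dimensional analogue. First I would set up coordinates so that the support hyperplane is $P=\{x_{n+1}=0\}$, which after an isometry of $\l^{n+1}$ we may assume to be the case. Since the boundary $\Gamma=x(\partial M)$ is a compact spacelike embedded hypersurface contained in $P$, it is automatically an $(n-1)$-sphere's-worth of data sitting in the Euclidean hyperplane $P$; more precisely, $\Gamma$ bounds a compact domain $D\subset P$. The spacelike condition on $x$ forces the orthogonal projection $\Pi:\l^{n+1}\to P$, $\Pi(q)=q+\langle q,a\rangle a$, to be a local diffeomorphism on $x(M)$ (the same inequality $|(d\Pi)_p(v)|^2=|v|^2+\langle v,a\rangle^2\ge|v|^2$ used in Lemma~\ref{lemma2}), and hence a covering map onto $\overline D$; since $D$ is simply connected this shows $x(M)$ is a graph over $\overline D$ and in particular a topological ball with embedded boundary.

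Next I would use the stationarity hypothesis: by Theorem~\ref{t-cara} (in the form valid for $\l^{n+1}$, which is what Section~\ref{sect-preli} really establishes modulo writing $n$ for $2$), $M$ has constant mean curvature $H$ and makes constant contact angle $\theta$ with $P$ along $\Gamma$, where $\langle N,N_\Sigma\rangle=-\cosh\theta$ with $N_\Sigma$ the constant future-directed unit normal of $P$. The key point is the boundary condition: constant contact angle with a \emph{totally geodesic} spacelike hyperplane forces the conormal $\nu$ of $M$ along $\Gamma$ to make a constant angle with $P$, and because $N_\Sigma$ is parallel in $\l^{n+1}$ one computes, exactly as in the displayed calculation proving $\sigma(t,\nu)=0$ in the proof of Theorem~\ref{t1}, that the relevant boundary term controlling the second fundamental form vanishes. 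Concretely: since $\nu=-\sinh\theta\,\nu_\Sigma+\cosh\theta\,N_\Sigma$ along $\Gamma$ (the analogue of \eqref{nuN22}) with $N_\Sigma$ constant and $\nu_\Sigma$ tangent to $P$, one gets that $x(M)$ meets $P$ in a hyperplane of $\l^{n+1}$ at constant angle, so the boundary $\Gamma$ is a round sphere lying in an affine spacelike $(n-1)$-plane.

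The final step is to invoke the classification of compact constant-mean-curvature spacelike hypersurfaces in $\l^{n+1}$ whose boundary is a round $(n-1)$-sphere contained in a spacelike hyperplane: by the standard result (Alexandrov-type reflection, or the tangency/maximum-principle argument referenced after Theorem~\ref{t-conj1}, cf.\ \cite{ap2}), such a hypersurface must be rotationally symmetric about the axis through the center of the sphere orthogonal to the plane of the boundary, and the only rotational spacelike CMC hypersurfaces that are compact with embedded spherical boundary and close up smoothly as graphs are the totally geodesic hyperplanar balls (when $H=0$) and the hyperbolic caps $\h^n_\pm(p,r)$ (when $H\ne0$). Combining this with the constant-angle condition pins down exactly these two families, which is the assertion of the theorem.

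The main obstacle I expect is the last step rather than the boundary computation: one needs a genuinely $n$-dimensional uniqueness theorem for CMC spacelike hypersurfaces with spherical boundary in a spacelike hyperplane, and to make sure the maximum-principle argument applies in the Lorentzian setting (using that spacelike CMC hypersurfaces satisfy an elliptic equation and comparing with hyperbolic caps as barriers). In the $n=2$ case this was replaced by the slick Hopf-differential argument, which has no higher-dimensional analogue, so here I must instead cite or reprove the Alexandrov-reflection / tangency principle for spacelike hypersurfaces; establishing that the graph $x(M)$ is symmetric under the reflections fixing the boundary hyperplane, and hence rotational, is the technical heart of the proof.
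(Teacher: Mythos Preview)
Your overall architecture (graph over a simply-connected domain, then Alexandrov reflection, then the spherical-boundary classification from \cite{ap1,ap2}) matches the paper's, but the middle step contains a genuine gap. From the formula $\nu=-\sinh\theta\,\nu_\Sigma+\cosh\theta\,N_\Sigma$ (or from the computation $\sigma(t,\nu)=0$) you cannot conclude that $\Gamma$ is a round sphere. In the two-dimensional proof of Theorem~\ref{t1} that boundary identity was fed into the Hopf differential to force global umbilicity; in higher dimensions it is a single scalar relation along $\partial M$ and says nothing about the intrinsic shape of $\Gamma$ inside $P$. So your sentence ``one gets that $x(M)$ meets $P$ in a hyperplane of $\l^{n+1}$ at constant angle, so the boundary $\Gamma$ is a round sphere'' is a non sequitur: constant contact angle with $P$ is the \emph{hypothesis}, not a new consequence, and it does not by itself force $\Gamma$ to be spherical.

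The paper reverses the order of these two steps. After showing $x(M)$ is a graph over $D$, it first uses the maximum principle to place $M$ on one side of $P$ (a step you omit), so that $M\cup D$ bounds a domain $W\subset\l^{n+1}$. Then it runs Alexandrov reflection with respect to the family of \emph{vertical} hyperplanes (orthogonal to $P$), applied directly to the stationary hypersurface: the constant contact angle is exactly what is needed to handle the case when the first touching point in the reflection process lies on $\partial M$, since then the tangent hyperplanes of $M$ and its reflection agree at that boundary point (cf.\ \cite{lo}). This yields rotational symmetry of $M$ about a line orthogonal to $P$, and \emph{only then} does it follow that $\Gamma$ is a round $(n-1)$-sphere. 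The final appeal to \cite{ap1} (or \cite{alp} for $n=2$) then classifies such hypersurfaces as umbilical. In short: Alexandrov gives you the spherical boundary, not the other way around.
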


\begin{proof} Let $x:M^n\rightarrow\l^{n+1}$ be a spacelike immersion of a compact submanifold $M$ with connected boundary. Let $\Gamma=x(\partial M)$ and assume that $\Gamma$ is included in the spacelike hyperplane $P=\{x_{n+1}=0\}$. Because  $\Gamma$ is embedded, $\Gamma$ encloses a simply-connected domain $D\subset P$. The spacelike condition of the immersion implies that  the orthogonal projection $\Pi:M\rightarrow P$ is a local diffeomorphism. A reasoning similar as in \cite[Lemma 3]{ap2} shows that $x(M)$ is a graph on $D$, in particular, $x:M\rightarrow\l^{n+1}$ is an embedding. Using the maximum principle for the constant mean curvature equation (\cite{gt}), it is known that a graph is included in one side of $P$. Without loss of generality, we assume that $M$ lies over $P$, that is, $M-\partial M\subset \{x_{n+1}>0\}$.

As a consequence, the surface $M$ together $D$ encloses a domain
$W\subset\l^{n+1}$. Now we are in conditions to apply the
Alexandrov method by family of parallel vertical hyperplanes
(\cite{al}). The fact that the angle between $M$ and $P$ is constant
along $\Gamma$ makes that  the Alexandrov process works well
because if there is a contact point between boundary points, the
condition of the constant contact angle gives that the tangent
hyperplanes agree between a point of $\Gamma$ and its reflected
one: see \cite{lo} for an example in a more general context. Then
one shows that $M$ is rotationally symmetric with respect to a
straight-line $L$ orthogonal to $P$. In particular, the boundary
$\Gamma$ is a round $(n-1)$-sphere $\s^{n-1}$. In \cite{ap1} (see
\cite{alp} in the two-dimensional case) it is proved  that the
only compact spacelike hypersurfaces in $\l^{n+1}$ spanning
$\s^{n-1}$ are umbilical, showing the result.
\end{proof}

In the proof we have first showed that the graph lies in one side
of the hyperplane. In this part of the proof, it is not necessary
to use that $M$ is a graph and that $H$ is constant on $M$ but
that $H$ does not vanish on the surface.

\begin{theorem}\label{t-conj2} Let $x:M^n\rightarrow\l^{n+1}$ be a spacelike immersion of a compact manifold  $M$  whose boundary lies in a hyperplane $P$. If the (non-necessary constant) mean curvature $H$ does not vanish, then $M$ lies in one side of $P$. If $H=0$ on $M$, then $M$ lies included in $P$.
\end{theorem}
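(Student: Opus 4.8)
The plan is to show that the height function relative to the spacelike hyperplane $P$ is sub- or superharmonic on $M$ according to the sign of $H$, and then to invoke the classical maximum principle. In contrast with the proof of Theorem~\ref{t-conj1}, no property of the boundary curve (such as embeddedness) and no graph representation of $M$ will be needed: only that $M$ is spacelike, that $\partial M\subset P$, and the sign of $H$ enter.

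After an isometry of $\l^{n+1}$ we may assume that $P=\{x_{n+1}=0\}$ is the spacelike hyperplane orthogonal to $a=(0,\dots,0,1)$, so that $\partial M\subset\{x_{n+1}=0\}$; in the case $H\neq0$ we may moreover assume that $M$ is connected (when $H\equiv0$ the argument below is run on each connected component, each of which has non-empty boundary, since a compact spacelike hypersurface of $\l^{n+1}$ is never closed). Write $h=x_{n+1}$ for the restriction to $M$ of the last coordinate and let $\Delta$ be the Laplace--Beltrami operator of the Riemannian metric induced by $x$.

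The key step is the well-known identity $\Delta x=nHN$ (the Laplacian of the position vector equals the mean curvature vector), a standard consequence of the Gauss formula. Since $h=-\langle x,a\rangle$, pairing this with $a$ yields
\[
\Delta h=-\langle\Delta x,a\rangle=-\,nH\,\langle N,a\rangle.
\]
Because $N$ is future-directed we have $\langle N,a\rangle<0$ everywhere, so $-n\langle N,a\rangle$ is a strictly positive function on $M$ and $\Delta h$ has, pointwise, the same sign as $H$. This is the one place where care is required: one must use the orientation convention in which $\langle N,a\rangle<0$ and $\Delta x=nHN$ (so that, e.g., $H(\h^2_+(p,r))=1/r$), and keep in mind that $\langle\,\cdot\,,a\rangle=-(\,\cdot\,)_{n+1}$, which is exactly what makes the sign of $\Delta h$ agree with that of $H$ rather than with its opposite.

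It then remains to split into cases and apply the maximum principle on the compact manifold $M$, using $h\equiv0$ on $\partial M$. If $H\equiv0$ then $h$ is harmonic, hence $h\equiv0$ and $x(M)\subset P$. If $H$ never vanishes then, $M$ being connected, $H$ has a constant sign; when $H>0$ one gets $\Delta h>0$ on the interior, so $h$ has no interior maximum and $h\le\max_{\partial M}h=0$ on $M$, i.e.\ $x(M)\subset\{x_{n+1}\le0\}$; when $H<0$ the same reasoning with $h$ superharmonic gives $x(M)\subset\{x_{n+1}\ge0\}$. In every case $M$ lies on one side of $P$, in agreement with the model examples ($\h^2_+$-type caps lie below $P$, $\h^2_-$-type caps above). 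Beyond this bookkeeping of signs there is no essential obstacle: the only analytic input is the standard maximum principle for a uniformly elliptic operator on a compact manifold with boundary.
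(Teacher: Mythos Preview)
Your argument is correct and is a genuinely different route from the paper's. The paper proceeds by a geometric comparison: it foliates $\l^{n+1}$ by the parallel spacelike hyperplanes $P_t=\{x_{n+1}=t\}$, looks at the highest and lowest points of $M$, and applies the tangency (comparison) principle for the mean curvature operator at those extremal contact points to force contradictory signs on $H$. You instead compute directly that the height function satisfies $\Delta h=-nH\langle N,a\rangle$, so that $\Delta h$ has the sign of $H$, and then invoke the weak maximum principle for sub/super-harmonic functions on a compact manifold with boundary. Both arguments are, at bottom, the maximum principle, but yours is more self-contained: it avoids the foliation/touching picture and the appeal to the PDE comparison principle for graphs, replacing them by a single explicit identity. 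The paper's approach, on the other hand, generalizes verbatim to other comparison families (as it does in the next theorem with the foliation by hyperbolic hyperplanes $\h^n_+(ta,r)$), where your height-function Laplacian trick would require a new computation. Your bookkeeping of signs and orientations is consistent with the paper's conventions, and your remark that a compact spacelike hypersurface of $\l^{n+1}$ cannot be closed (so every component meets $\partial M$) is the right way to handle the $H\equiv0$ case componentwise; note that the paper's own proof likewise implicitly uses connectedness when it passes from $H(p)<0$ at the highest point to $H<0$ on all of $M$.
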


\begin{proof}
First, we point out that we do not know that $M$ is a graph or
not, since we admit the possibility that $x(\partial M)$ is not an
embedding. But we know that $P$ is a spacelike hyperplane because
the immersion is spacelike and $x(\partial M)$ is a closed
submanifold of $P$. Without loss of generality, we suppose that
$P$ is the hyperplane $x_{n+1}=0$. Denote $P_t=\{x\in
\l^{n+1};\langle x,a\rangle=-t\}$. As $M$ and $P_t$ are spacelike,
we consider the future-directed timelike orientations  $N$ and
$N_t$ respectively, that is, if $a=(0,\ldots,0,1)$, $\langle
N,a\rangle<0$ on $M$ and  $N_t= a$ for   any $t$.

Assume that $H\not=0$ on $M$. By contradiction, we assume that $M$ has points in both sides of $P$. At the highest point $p$ of $M$ with respect to the plane $P$, let us place  $P_{t_1}$, where   $t_1=x_{n+1}(p)>0$. As $N_{t_1}(p)=N(p)=a$ and $P_{t_1}$ lies above $M$, the maximum principle says that $0>H(p)$.  Because $H\not=0$, then we conclude that $H<0$  on $M$. Similarly, at the lowest point $q$, with $t_2=x_{n+1}(q)<0$, we place   the hyperplane $P_{t_2}$. Now $M$ lies over $P_{t_2}$ and the maximum principle implies $H(q)>0$: contradiction.

If $H=0$, the same reasoning as above says us that $M\subset P$.
\end{proof}

We try to do the same reasoning for stationary hypersurfaces being
the support hypersurface a hyperbolic hyperplane $\h^n$. Following
the same ideas as in Theorem \ref{t-conj1}, the two ingredients are i)
show that $M$ lies in one side of $\h^n$ and ii) use the
Alexandrov method to prove that $M$ is rotational, finishing as in
Theorem \ref{t-conj1}.

Here we prove the first step but, after showing this fact,  we are not able to   apply in a suitable way the Alexandrov process, because reflections with respect to hyperplanes do not leave invariant the support hypersurface $\h^n$.

\begin{theorem} Consider $M^n$ a compact $n$-dimensional manifold with non-empty boundary and $x:M^n\rightarrow\l^{n+1}$ be  a spacelike immersion with (non-necessary constant) mean curvature $H$. Assume that $x(\partial M)$ lies in a hyperbolic plane $\h^n$ and denote by $h>0$ the mean curvature of $\h^n$ for an appropriate orientation. If   $|H|\not=h$, then $M$ lies in one side of $\h^n$. If $|H|=h$, then either $M$ lies included in $\h^n$ or $M$ lies in one side of $\h^n$.
\end{theorem}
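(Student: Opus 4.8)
\emph{Proof proposal.} The plan is to adapt the proof of Theorem~\ref{t-conj2}, replacing the foliation by parallel spacelike hyperplanes with the foliation of the interior of the light cone by hyperbolic hyperplanes $\h^n_+(O,r)$, $r>0$. After an isometry and a homothety of $\l^{n+1}$ --- both of which preserve the hypotheses, a homothety rescaling $H$ and $h$ by the same factor --- we may take $\h^n=\h^n_+(O,1)=\{x:\langle x,x\rangle=-1,\ x_{n+1}>0\}$, so that $h=1$, and assume $M$ connected, oriented by its future-directed unit normal $N$. Set $\phi=\langle x,x\rangle$ on $\overline M$; then $\phi\equiv-1$ on $\partial M$, ``$M\subset\h^n$'' means $\phi\equiv-1$, and ``$M$ lies to one side of $\h^n$'' means $\phi>-1$ throughout $M\setminus\partial M$ or $\phi<-1$ throughout $M\setminus\partial M$ (using that $M\setminus\partial M$ is connected, that the two components of $\{\langle x,x\rangle<-1\}$ are separated, and that the one whose closure meets $x(\partial M)$ is the upper one). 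One computes $\nabla^M\phi=2x^{\top}$ and $\mathrm{Hess}^M\phi(v,v)=2\langle v,v\rangle+2\langle\mathrm{II}(v,v),x\rangle$ for $v\in TM$.

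\emph{Step 1: the image lies inside the future light cone.} At a critical point $p$ of $\phi$ the vector $x(p)$ is normal to $M$, so $\phi(p)\le 0$, with equality only if $x(p)=O$, in which case $\mathrm{Hess}^M\phi(p)=2\langle\cdot,\cdot\rangle$ is positive definite ($M$ spacelike) and $p$ is a strict local minimum. Hence $\phi$ admits no interior maximum of nonnegative value, and since $\phi\equiv-1$ on $\partial M$ we get $\max_{\overline M}\phi<0$; with connectedness and $x(\partial M)\subset\h^n_+$ this forces $x(M)\subset\mathcal C^{+}:=\{x:\langle x,x\rangle<0,\ x_{n+1}>0\}$, the interior of the future light cone, which is foliated by the leaves $\h^n_+(O,r)$. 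Writing $\rho=\sqrt{-\phi}$ (so $x(q)\in\h^n_+(O,\rho(q))$ and $\rho\equiv1$ on $\partial M$), at an interior critical point $r$ of $\phi$ the vector $x(r)$ is future timelike and equals $\rho(r)N(r)$; substituting this into the Hessian formula (equivalently, comparing $M$ by the maximum principle with the leaf through $r$, which has mean curvature $\pm1/\rho(r)$) one finds: at an interior local minimum of $\rho$ all principal curvatures of $M$ are $\le-1/\rho(r)$, so $H(r)\le-1/\rho(r)$; at an interior local maximum of $\rho$ all principal curvatures are $\ge-1/\rho(r)$, so $H(r)\ge-1/\rho(r)$.

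\emph{Step 2: conclusion.} If $|H|\neq 1$ everywhere: $\rho$ is nonconstant (else $M\subset\h^n$ and $|H|=1$), so $\rho_{\max}>1$ or $\rho_{\min}<1$; if $\rho_{\max}>1$ then $H>-1$ at an interior point, if $\rho_{\min}<1$ then $H<-1$ at an interior point, and if both held the intermediate value theorem on the connected $M$ would give a point with $H=-1$, impossible --- so $\rho\ge1$ on $M$ or $\rho\le1$ on $M$. Say $\rho\ge1$; then $\rho>1$ on $M\setminus\partial M$, for otherwise $\rho$ attains its minimum $1$ at an interior point, necessarily a local minimum, where Step~1 gives $H\le-1$ hence $H<-1$, again contradicting $H>-1$ elsewhere by the intermediate value theorem; thus $M$ lies to one side of $\h^n$, and the case $\rho\le1$ is symmetric. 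If $|H|=1$: $H\equiv1$ or $H\equiv-1$ on $M$; assuming $M\not\subset\h^n$, the case $\rho_{\min}<1$ is impossible (Step~1 would give $H<-1$), so $\rho\ge1$ and $\rho_{\max}>1$; and if $\rho$ attained $1$ at an interior point $p_0$ then $M$ and $\h^n_+(O,1)$ would be tangent at $p_0$ with matching future normals, $M$ locally on one side, and the same constant mean curvature (Step~1 forces $H\equiv-1$), so by the tangency principle for the constant mean curvature equation $M$ would coincide with $\h^n_+(O,1)$ near $p_0$, and then, by an open--closed argument on the connected $M\setminus\partial M$, globally --- contradicting $M\not\subset\h^n$; hence $\rho>1$ on $M\setminus\partial M$ and $M$ lies to one side, the remaining possibility being $M\subset\h^n$.

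\emph{Main difficulty.} The heart of the matter is Step~1: before comparing $M$ with $\{\h^n_+(O,r)\}$ one must know these leaves reach every point of $x(M)$, and this is exactly where the spacelike hypothesis is used --- a spacelike hypersurface is tangent neither to a timelike pseudosphere $\s^n_1(O,r)$ nor to the light cone, which (with the position-vector computation) confines $x(M)$ to the cone's interior and fixes the sign in the comparison. Once this is secured, the remainder runs parallel to Theorem~\ref{t-conj2}, the equality case requiring in addition the tangency principle.
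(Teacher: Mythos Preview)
Your argument is correct, but it takes a genuinely different route from the paper's. You foliate the interior of the future light cone by the \emph{concentric} hyperbolic hyperplanes $\h^n_+(O,r)$, $r>0$, whose mean curvatures vary with $r$; this forces your Step~1, where you use the position vector to confine $x(M)$ to the cone before any comparison can begin. The paper instead foliates all of $\l^{n+1}$ by the \emph{translated} hyperbolic hyperplanes $\h^n_+(ta,r)$, $t\in\r$ (with $r$ fixed), each of which has the same mean curvature $h$. Because this foliation is global and the leaves are congruent, the paper simply slides $t$ from $-\infty$ and from $+\infty$, compares at the first interior contact points, and derives the contradiction $H>h$ and $H<h$ exactly as in Theorem~\ref{t-conj2}; no preliminary confinement step is needed, and the equality case $|H|=h$ is handled by the same sweep. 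What your approach buys is the additional geometric fact that $x(M)$ always sits inside the future light cone; what the paper's approach buys is economy, since the translated foliation makes the argument a line-for-line transcription of the hyperplane case.
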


\begin{proof} We use notations similar as in Preliminaries in the context $\l^{n+1}$. Without loss of generality, we assume that the support hypersurface is $\h^n=\h^n_{+}(O,r)$, $O$ the origin of coordinates, $r=1/h$. We consider on $\h^n$ the future-directed timelike orientation $N_h$, that is, $N_h(p)=p$. Consider the foliation of $\l^{n+1}$ given by
$\h^n_+(ta,r)$, $t\in\r$. Let us remark that
$\h_{+}^n(0a,r)=\h^n$. All these hypersurfaces have constant mean
curvature $h>0$ for the future-directed timelike orientation. Each
one of the sides of $\h^n$ are $\{x\in\l^{n+1};\langle
x,x\rangle<-r^2\}$ and $\{x\in\l^{n+1};\langle x,x\rangle>-r^2\}$.

Suppose that $|H|\not=h$ and by contradiction, we assume that $M$
has points in both sides of $\h^n$. By the compactness of $M$, for
$t$ sufficiently close to $-\infty$,  $\h^n_{+}(ta,r)$ does not
intersect $M$. Letting $t \nearrow 0$, there is a first time $t_1<0$
such that $\h^n_{+}(t_1a,r)$ touches $M$ at a (interior) point $q$
but $\h^{n}_{+}(ta,r)\cap M=\emptyset$ for $t<t_1$. Let us compare
$M$ and $\h^n_{+}(t_1a,r)$ at $q$ and we use  that the orientations
of both hypersurfaces agree at $q$ since both are the
future-directed orientations. The maximum principle says that
$H(q)>h$. As $|H|\not= h$, then $H>h$ on $M$.

Consider now the other side of $\h^n$. By a similar reasoning, there
is $t_2>0$ such that $\h^n_{+}(t_2a,r)$ touches $M$ at a (interior)
point $p$ but $\h^{n}_{+}(ta,r)\cap M=\emptyset$ for $t>t_2$.
Comparing $M$ and $\h^n_{+}(t_2a,r)$ at $p$, the maximum principle
says now that $h>H(p)$: contradiction.

In the case that $|H|=h$ and assuming that $M\not\subset \h^n$, the same reasoning implies in the first step that $H(q)>h$, which is not possible. Thus the case $t_1<0$ is not possible. This shows that $M$ lies over $\h^n$. If the number $t_2>0$ exists, then we have that $H=-h$.
\end{proof}

The last situation in the proof appears when one consider $\h^n$
as above and $M$ is a  hyperbolic cap of   $\h^n_{-}(ta,r)$, for
$t>0$ sufficiently big.

\vspace*{.5cm}

Acknowledgment: The first author is partially supported by MEC-FEDER grant no. MTM2011-22547 and Junta de Andaluc\'{\i}a grant no. P09-FQM-5088. The second author   was supported by Basic Science Research Program through the National Research Foundation of Korea (NRF) funded by the Ministry of Education, Science and Technology (2011-0026779) and Pusan National University Research Grant, 2011.

\end{document}